\documentclass[12pt,a4paper,oneside,openright]{amsart}

\usepackage{xcolor}
\usepackage[english]{babel}
\usepackage[T1]{fontenc}
\usepackage[utf8]{inputenc}
\usepackage{amsmath,amsfonts,amssymb,amsthm}
\usepackage{tikz}
\usetikzlibrary{arrows,decorations.pathmorphing,positioning,backgrounds,fit,matrix}
\newtheorem{thm}{Theorem}[section]
\newtheorem{lem}[thm]{Lemma}
\newtheorem{prop}[thm]{Proposition}

\newtheorem{defn}[thm]{Definition}

\newtheorem{exmp}[thm]{Example}

\newtheorem{remark}[thm]{Remark}

\newtheorem{cor}[thm]{Corollary}
\newtheorem{theoremA}{Theorem}

\begin{document}
\title[Curvatures and Volume of Graphs]{Inner-Outer Curvatures, Ricci-Ollivier Curvature and Volume Growth  of Graphs}
\author{Andrea Adriani and Alberto G. Setti}
\keywords{Infinite weighted graphs, curvature, volume growth}
\subjclass[2010]{05C63, 53C21}

\maketitle

\begin{abstract}

We are concerned with the study of different notions of curvature on graphs. We show that if a graph has stronger inner-outer curvature growth than a model graph, then it has faster volume growth too. We also  study the relationhips of volume growth with other kind of curvatures, such as the Ollivier-Ricci curvature.
\\

\noindent
DiSTA,   Universit\'a dell'Insubria,  Via Valleggio 11, 22100 Como, Italy\\
 (aadriani@uninsubria.it)\\
DiSAT,   Universit\'a dell'Insubria Insubria,  Via Valleggio 11, 22100 Como, Italy\\
 (alberto.setti@uninsubria.it)\\
\end{abstract}

\section{Introduction and main results}

In recent times there has been an increasing interest in the study of different notions of curvature on graphs (see, for example \cite{{BJL12}, {HL}, {HM}, {LY10}, {LLY11}, {LLY13}, {LMP}, {LV}, {MW}, {Oll07}, {Oll09}, {Sturm}}) in order to obtain analogues  of well known results valid for Riemannian manifolds. In many cases this study leads to results very different from those obtained on manifolds, unveiling intriguing differences between these two realms and confirming the intrinsic interest of the subject (see, for example, \cite{MW}, Theorem 4.11). It is well known that the volume growth of a Riemannian manifold can be controlled from above in terms of its Ricci curvature: by way of example, if a Riemannian manifold has a greater Ricci curvature than a model manifold, then it has smaller volume growth (by contrast, a control from below is significantly more delicate and involves both the sectional curvature and topological properties of the space).  It is natural to ask if, using appropriate notions of curvature, such results have  analogues in the graph setting. The situation appears to be less straightforward than one may think. On the one hand, an outcome of our investigation is that, using  notation and terminology as in \cite{KLW}, a stronger curvature growth  in general implies faster volume growth (the difference in the versus of the inequality is to be ascribed to a change of sign in the definition of the curvature). On the other hand it turns out that, contrary to what happens in the manifolds setting,
a control of the Laplacian of the distance function (which is implied by a control of the Ollivier Ricci curvature, \cite{MW}, and in turn allows to obtain gradient estimates) does not in general allow to control volume growth, thus showing a significant difference  with what happens in the manifold setting.

The paper is organized as follows: in Section 2 we introduce the setting and some basic notation on graphs with the different notions of curvature considered. In Section 3 we state and prove our main results. The following special case of Theorem~\ref{comparison} below illustrates how controlling the inner/outer curvatures allows to control volume growth.
\begin{theoremA}
Let $ G_1 =(V_1,b_1,m_1) $ and $ G_2=(V_2,b_2,m_2) $ with roots $o_i\in G_i$ and  inner/outer curvatures
 $k_\pm^i$, $i=1,2$.
If $k_+^1(x_1)\geq k_+^2(x_2)$ and $k_-^1(x_1)\leq k_-^2(x_2)$ for all $x_1\in S_r(o_1)$, $x_2\in S_r(o_2)$ and all $r\geq 0$ then $m_1(S_r(o_1))\geq m_2(S_r(o_2))$.
\end{theoremA}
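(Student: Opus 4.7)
My plan is to proceed by induction on $r$, reducing the comparison of total sphere volumes to a one-step recursion obtained from an elementary edge-counting identity. The first step is to observe that, for any rooted weighted graph $(V,b,m)$, the vertex-wise outer and inner curvatures $k_\pm$ (which are, in the KLW formalism, the outward/inward weighted degrees normalized by $m$) satisfy the conservation identity
\begin{equation*}
\sum_{x \in S_r(o)} k_+(x)\, m(x) \;=\; \sum_{y \in S_{r+1}(o)} k_-(y)\, m(y),
\end{equation*}
since both sides equal $\sum_{x\in S_r,\, y\in S_{r+1}} b(x,y)$ by the symmetry of $b$.

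From this identity I would extract two one-sided recursions for the sphere volumes. Bounding $k_+$ from below on $S_r(o)$ and $k_-$ from above on $S_{r+1}(o)$ gives
\begin{equation*}
m\bigl(S_{r+1}(o)\bigr) \;\geq\; \frac{\min_{x\in S_r(o)} k_+(x)}{\max_{y\in S_{r+1}(o)} k_-(y)}\; m\bigl(S_r(o)\bigr),
\end{equation*}
and symmetrically an upper bound in terms of $\max k_+$ and $\min k_-$. The pointwise hypotheses $k_+^1(x_1)\geq k_+^2(x_2)$ and $k_-^1(x_1)\leq k_-^2(x_2)$ are strong enough to separate the extrema:
\begin{equation*}
\min_{S_r(o_1)} k_+^1 \;\geq\; \max_{S_r(o_2)} k_+^2
\qquad\text{and}\qquad
\max_{S_{r+1}(o_1)} k_-^1 \;\leq\; \min_{S_{r+1}(o_2)} k_-^2,
\end{equation*}
so the ratio $\min k_+^1/\max k_-^1$ controlling $G_1$ from below dominates the ratio $\max k_+^2/\min k_-^2$ controlling $G_2$ from above.

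The inductive step then writes itself: assuming $m_1(S_r(o_1))\geq m_2(S_r(o_2))$, I chain the lower recursion for $G_1$, the ratio inequality derived from the hypothesis, the inductive hypothesis, and finally the upper recursion for $G_2$ to get $m_1(S_{r+1}(o_1))\geq m_2(S_{r+1}(o_2))$. The base case $r=0$ reduces to $m_1(o_1)\geq m_2(o_2)$, which I expect to be part of the full version of the theorem in the body or an implicit normalization. The main technical points I need to watch are that $k_-$ is strictly positive on every nonempty sphere $S_{r+1}(o)$ so the ratios are well defined (this should follow from connectedness, since each vertex at distance $r+1$ has at least one neighbour at distance $r$), and that the sign conventions in the KLW definitions of $k_\pm$ are threaded consistently through the conservation identity; once these are dispatched the induction closes with no further work.
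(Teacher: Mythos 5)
Your argument is correct and follows essentially the same route as the paper: an induction on $r$ whose engine is the conservation identity $\sum_{x\in S_r}k_+(x)\,m(x)=\sum_{y\in S_{r+1}}k_-(y)\,m(y)$, which is exactly the paper's Lemma on volume growth phrased via the associated birth-death chain. The only divergence is that the paper replaces the curvatures by their spherical averages $\mathcal{A}k_\pm$ (reducing to birth-death chains) rather than by extrema over spheres, which is why its Theorem~\ref{comparison} holds under the weaker hypothesis of average curvature comparison while your min/max ratios use the pointwise hypothesis of Theorem~A; you are also right that the normalization $m_1(o_1)=m_2(o_2)$ is needed for the base case and is implicit in the statement.
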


While the conclusion of the theorem does not hold if it is only assumed that the curvatures inequalities are satisfied for large enough $r$,  we are able to show that an asymptotic control on the curvature allows to control the volume growth up to a constant, see Theorem~\ref{asympcomp}.

We next study the relationships between the Ollivier curvature of a graph (for some useful results on the subject see, for example, \cite{MW}) and its inner and outer curvatures, showing that, in general, this notion is not strong enough to determine the behaviour of the volume growth of the graph for comparison theorems, not even in the case of model graphs, or simpler birth-death chains.

By contrast, the positivity of the Ollivier Ricci curvature implies bounds on the diameter of a graph and therefore of its volume under assumptions on the degree (\cite{BRT,MW,Pae}). In addition, the Ollivier curvature has shown to have a role in obtaining spectral estimates (\cite{BJL12,BRT}).

\section{Set up and notation}

A graph is a quadruple $ G = (V,b,c,m) $, where $ V $ is  a countable set,  $ m: V \to (0,\infty) $ is a measure of full support on $V, $
$ b: V \times V \to [0, \infty) $ is a symmetric function which vanishes on the diagonal and represents the edge weight and $ c : V \to [0,\infty) $ is the potential, or killing term, of the graph. We say that two vertices $ x $ and $ y $ are neighbors, and write $ x \sim y $, if $ b(x,y)> 0 $; in this case we denote by $(x,y)$ the edge connecting $ x $ and $ y $. A path in $V$ is a sequence of vertices  $\dots \sim x_{i-1}\sim x_i\sim x_{i+1}\sim \dots $, and a graph is connected if for every $ x, y \in  V $ there exists             a path $ x_0=x \sim x_1 \sim ... \sim x_n = y $ joining $x$ and $y$. In this case, the number $n$ of edges in the path is the (combinatorial) length of the path and the distance $d(x,y)$ between $x$ and $y$ is the length of the shortest path connecting $x$ and $y$. Further, we say that a graph is locally finite if every $ x \in V $ has finitely many neighbors, that is if $ \left| \left\{ y: b(x,y) > 0 \right\} \right| < \infty $. Note that in particular this condition implies that the degree
$$
\mathrm{Deg}(x)=\frac 1{m(x)} \sum_{y \in V} b(x,y)
$$
is finite for every $ x \in V $.\\
In this paper we will consider locally finite graphs with no killing term. We will then denote a generic graph by the triple $ (V,b,m) $.\\
We let $ C(V) = \left\{ f : V \to \mathbb{R} \right\} $ denote the space of real-valued functions on $ V $ and define the formal Laplacian $ \Delta : C(V) \to C(V) $ by the formula
\begin{equation*}
\Delta f(x) = \frac{1}{m(x)} \sum_{y \in V} b(x,y) \left( f(x)-f(y) \right),
\end{equation*}
for all $ x \in V $.\\
Note that, because of the assumption of local finiteness, the formal Laplacian is well-defined for every function $ f \in C(V) $ and for every $ x \in V $.
\\
Let $ x_0 \in V $ be a fixed vertex.
For every nonnegative integer $r\in \mathbb{N}_{0}$, we write
$S_r:=S_r(x_0):= \left\{ x : d(x,x_0) = r \right\} $ and $ B_r:=B_r(x_0):= \left\{ x : d(x,x_0) \leq r \right\} $ and define the inner and outer curvatures at   $ x \in S_r $ as
\begin{equation*}
k_{\pm}(x) = \frac{1}{m(x)} \sum_{y \in S_{r \pm 1}} b(x,y), \quad k_-(x_0)=0.
\end{equation*}
We say that a graph is weakly spherically symmetric, or that it is a model, if for some  vertex $ o $ (which we will refer to as the  root of the graph) the corresponding inner and outer curvatures  $ k_{\pm} $ are spherically symmetric functions, that is if $ k_{\pm}(x) = k_{\pm}(x') $ for every $ x, x' \in S_r(o) $, for every $ r \geq 0 $. Moreover, if $ V = \mathbb{N}_0 $ and $ b(x,y) = 0 $ whenever $ |x-y| \neq 1 $  we say that $ G $ is a birth-death chain.\\

In the context of metric measure spaces, other useful notions of curvature have been defined in terms of transport theory, see \cite{LV,Ohta,Oll07,Oll09, Sturm}. In the setting of graph theory a particularly fruitful choice is the Ricci curvature introduced in \cite{LLY11}, and later extended in \cite{MW}, where it is referred to as Ollivier (Ricci) curvature and defined by
\begin{equation*}
k(x,y) = \lim_{\epsilon \to 0} \left( 1 - \frac{W(m_{x}^{\epsilon}, m_{y}^{\epsilon})}{d(x,y)} \right),
\end{equation*}
for every couple of vertices $ x, y \in V $, with $ x \neq y $,
where $ W $ denotes the $ L^1 $-Wasserstein distance and
\begin{equation*}
m_x^{\epsilon}(y) = \begin{cases} 1-\epsilon \text{Deg}(x) & \text{ : } y=x \\
  \epsilon b(x,y)/m(x) & \text{ : otherwise.}
\end{cases}
\end{equation*}
A crucial result in \cite{MW} is that the Ollivier curvature can be equivalently defined in terms  of the Laplacian, namely,
\begin{equation}\label{Oll curv}
k(x,y) = \inf\limits_{f \in \text{Lip}(1), \nabla_{xy}f=1} \nabla_{xy} \Delta f,
\end{equation}
where
\begin{equation*}
\nabla_{xy} f= \frac{f(y)-f(x)}{d(x,y)}
\end{equation*}
and Lip$(1) = \left\{ f \in C(V) : |f(x)-f(y)| \leq d(x,y) \text{ for all } x,y \in V \right\} $.

Bounds on the Ollivier Ricci curvature have been used in \cite{MW} to obtain estimates for the Laplacian of the distance function, to describe optimal conditions for the stochastic completeness of a weighted graph and to deduce diameter bounds. On the other hand, we will show in Subsection~\ref{OllCurv} that in general the Ollivier curvature does not allow to control the volume growth.

\section{Curvatures and Volume on graphs}

In this section we study the relationships between inner and outer curvatures and volume for model graphs and general graphs. We begin by defining what it means for a graph to have stronger/weaker curvature growth than that of a model (see \cite{KLW,RKW2}, where this notion is used to obtain comparison results concerning stochastic properties, such as the Feller property and stochastic completeness).
Moreover, we recall the definition of a birth-death chain associated to a graph, see \cite{MW}, where such graphs are used to obtain Laplacian comparison results.
\begin{defn}\label{strongercurv}
Let $ G=(V,b,m) $ be a graph and $ \tilde{G}=(\tilde{V}, \tilde{b}, \tilde{m}) $ be a model graph with root $ o $. Let $ x_0 \in V $ be a fixed vertex. We say that $ G $ has stronger (respectively, weaker) curvature growth than $ \tilde{G} $ if
\begin{itemize}
\item[(i)] $ m(x_0) = \tilde{m}(o) $,
\item[(ii)] for all $ r \geq 0 $ and  $ x \in S_r(x_0) $,
\begin{equation*}
\begin{split}
\, & k_{+}(x) \geq \tilde{k}_{+}(r) \text{  and  } k_{-}(x) \leq \tilde{k}_{-}(r) \\
\text{(respectively, } & k_{+}(x) \leq \tilde{k}_{+}(r) \text{  and  } k_{-}(x) \geq \tilde{k}_{-}(r)),
\end{split}
\end{equation*}
where we recall that, by definition,  $ k_{-}(x_0) = \tilde{k}_-(o) = 0 $.
\end{itemize}
We say that $ G $ has faster volume growth than $ \tilde{G} $ if, for all $ r \geq 0 $,
\begin{equation*}
m \left( S_r (x_0) \right) \geq \tilde{m} \left( S_r(o) \right)
\end{equation*}
\end{defn}

\begin{defn}
Let $ G =(V,b,m) $ be a graph and let $ x_0 \in V $ be a fixed vertex. Its associated birth-death chain $ \bar{G}=(\mathbb{N}_0, \bar{b}, \bar{m}) $ is defined by setting
\begin{equation*}
\bar{m}(r) := m \left( S_r \right)  \text{ for all }  r \geq 0,
\end{equation*}
\begin{equation*}
\bar{b}(r,r+1) := \sum_{x \in S_r \atop y \in S_{r+1}} b(x,y) \text{ for all } r \geq 0.
\end{equation*}
We remark that the summation on the right hand side in the above formula is precisely the quantity denoted by $ \partial B(r) $ in \cite{KLW}.
\end{defn}

Note that, if $ \mathcal{A}: C(V) \to C(V) $ denotes the averaging operator which acts as
\begin{equation*}
\mathcal{A} f(x) = \frac{1}{m \left( S_r \right)} \sum_{y \in S_r} f(y) m(y)
\end{equation*}
for all $ x \in S_r $, then
\begin{equation*}
\begin{split}
\bar{k}_{\pm}(r) & = \frac{1}{\bar{m}(r)} \bar{b}(r,r \pm 1) \\
& = \frac{1}{m \left( S_r \right)} \sum_{z \in S_r \atop y \in S_{r \pm 1}} b(z,y) \\
% & = \frac{1}{\tilde{m}(x)} \sum_{y \in S_{r %\pm 1}} \tilde{b}(x,y) \\
% & = A \tilde{k}_{\pm}(r) \\
& = \frac{1}{m \left( S_r \right)} \sum_{z \in S_r} \frac{1}{m(z)} \sum_{y \in S_{r \pm 1}} b(z,y) m(z) \\
& = \mathcal{A}k_{\pm}(x).
\end{split}
\end{equation*}

This shows that the birth-death chain associated with $G$ has the same volume growth and its inner and outer curvatures are the averages of those of $G$.
Since we are interested in volume growth comparisons and estimates, the above discussion motivates the following definition of \textit{stronger average curvature growth}, which will be used to prove one of the main results of this section.

\begin{defn}\label{strongercurvnew}
Let $ G_1=(V_1,b_1,m_1), G_2=(V_2,b_2,m_2) $ be two graphs. Let $ x_1 \in V_1 $ and $ x_2 \in V_2 $ be fixed vertices. We say that $ G_1 $ has stronger average curvature growth than $ G_2 $ if the birth-death chain $ \bar{G}_1 $   associated with $ G_1 $  has stronger curvature growth than  the birth-death chain $ \bar{G}_2 $ associated with $ G_2 $. Namely, denoting by $ \mathcal{A}_i $ the averaging operator on $ G_i $,
\begin{itemize}
\item[(i)] $ m_1(x_1) = m_2(x_2) $,
\item[(ii)] for all $ r \geq 0 $ and  $ x' \in S_r(x_1), x'' \in S_r(x_2) $,
\begin{equation*}
\begin{split}
\, & \mathcal{A}_1 k^1_{+}(x') \geq \mathcal{A}_2 k^2_{+}(x'') \text{  and  } \mathcal{A}_1 k^1_{-}(x') \leq \mathcal{A}_2 k^2_{-}(x''), \\
%\text{(respectively, } & \mathcal{A}k^1_{+}(x') %\leq \mathcal{A}k^2_{+}(x'') \text{  and  } %\mathcal{A}k^1_{-}(x') \geq \mathcal{A}k^2_{-}%(x'')),
\end{split}
\end{equation*}
where, by definition, $ k^1_{-}(x_1) = k^2_-(x_2) = 0 $.
\end{itemize}
\end{defn}

\begin{remark}
It is clear that, if two graphs satisfy Definition \ref{strongercurv} then they also satisfy  Definition \ref{strongercurvnew}. We want to underline that the validity of Definition \ref{strongercurvnew} is not enough to obtain comparison theorems for the usual stochastic properties. By way of  example, in  \cite{XH} the author constructed an example of a graph  $ G $  such that
\begin{equation}\label{stoc_compl}
\sum_{r=0}^{\infty} \frac{m(B_r)}{\displaystyle{\sum_{x \in S_r, y \in S_{r+1}} b(x,y)}} = \infty,
\end{equation}
but fails to satisfy the weak Omori-Yau maximum principle, and therefore it is stochastically incomplete. On the other hand, the birth-death chain $\overline{G}$ associated to $G$  is stochastically complete since it clearly satisfies \eqref{stoc_compl}, which is a necessary and sufficient condition for the stochastic completeness of model graphs (see \cite{KLW}, Theorem 5), while the condition in Definition \ref{strongercurvnew} trivially holds with equality.
\end{remark}

We are now ready to state and prove the main result of this section.

\begin{thm}\label{comparison}
Let $ G_1 =(V_1,b_1,m_1) $ and $ G_2=(V_2,b_2,m_2) $ be two graphs such that $ G_1 $ has stronger average curvature growth than $ G_2 $. Then $ G_1 $ has faster volume growth than $ G_2 $.
\end{thm}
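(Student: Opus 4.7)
The plan is to reduce the problem entirely to the birth-death chains $\bar{G}_1$ and $\bar{G}_2$ associated with $G_1$ and $G_2$, and then to establish the resulting comparison for birth-death chains by a one-step induction on $r$. The reduction is immediate from the calculation carried out just before Definition \ref{strongercurvnew}: for every $r \geq 0$ one has $\bar{k}^i_{\pm}(r) = \mathcal{A}_i k^i_{\pm}$ on $S_r(x_i)$ and $\bar{m}_i(r) = m_i(S_r(x_i))$ by construction. Thus the stronger-average-curvature-growth hypothesis for the pair $(G_1,G_2)$ is precisely the stronger-curvature-growth hypothesis of Definition \ref{strongercurv} for the pair of birth-death chains $(\bar{G}_1, \bar{G}_2)$, while the conclusion to be proved becomes $\bar{m}_1(r) \geq \bar{m}_2(r)$ for every $r \geq 0$. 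So from the outset we may assume that both graphs are birth-death chains.

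For a birth-death chain, each sphere $S_r$ of positive measure is a singleton, so the two curvatures at consecutive radii are linked by the discrete conservation identity
\begin{equation*}
\bar{m}(r)\,\bar{k}_+(r) \;=\; \bar{b}(r,r+1) \;=\; \bar{m}(r+1)\,\bar{k}_-(r+1),
\end{equation*}
which, whenever $\bar{k}_-(r+1) > 0$, yields the one-step recursion $\bar{m}(r+1) = \bar{m}(r)\,\bar{k}_+(r)/\bar{k}_-(r+1)$. The main argument will then be a short induction: the base case $r = 0$ is condition (i) of Definition \ref{strongercurvnew}, and for the inductive step, multiplying the inductive hypothesis $\bar{m}_1(r) \geq \bar{m}_2(r)$ by the ratio inequality
\begin{equation*}
\frac{\bar{k}^1_+(r)}{\bar{k}^1_-(r+1)} \;\geq\; \frac{\bar{k}^2_+(r)}{\bar{k}^2_-(r+1)},
\end{equation*}
which follows immediately from the two curvature comparisons, gives $\bar{m}_1(r+1) \geq \bar{m}_2(r+1)$.

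The only delicate point is the bookkeeping with empty spheres and with vertices where $\bar{k}_-$ vanishes, since the recursion involves dividing by $\bar{k}_-(r+1)$. I expect to handle this by a short case split inside the inductive step: when $\bar{m}_2(r+1) = 0$ the desired inequality is trivial, and otherwise the conservation identity forces $\bar{k}^2_-(r+1) > 0$ and $\bar{k}^2_+(r) > 0$, whence the hypothesis $\bar{k}^1_+(r) \geq \bar{k}^2_+(r)$ propagates positivity to the $G_1$ side and legitimizes the recursion. Once this minor case analysis is dispensed with, nothing else in the argument requires care.
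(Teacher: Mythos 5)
Your proof is correct and follows essentially the same route as the paper: reduce to the associated birth-death chains via the averaging identity, then induct on $r$ using the outer-curvature inequality at radius $r$ and the inner-curvature inequality at radius $r+1$ (the paper chains the inequalities through $b_i(r,r+1)$ rather than writing the curvature ratio explicitly, but this is the same computation). Your explicit case split for empty spheres and vanishing $\bar{k}_-(r+1)$ is a welcome bit of extra care that the paper's proof leaves implicit.
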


\begin{proof}
By the above discussion, without loss of generality we may assume that $ G_1 $ and $ G_2 $ are birth-death chains.\\
We proceed by induction: for $ r=0 $, by the normalization assumption, we have $ m_1(0) = m_2(0) $.\\
For the induction argument we now assume that $ m_1(r) \geq m_2(r) $ and prove that $ m_1(r+1) \geq m_2(r+1) $.\\
By assumption
\begin{equation*}
 \frac{b_1(r,r+1)}{m_1(r)}=k_+^1(r) \geq k_+^2(r)= \frac{b_2(r,r+1)}{m_2(r)} ,
\end{equation*}
whence, rearranging,
\begin{equation}
\label{dis 2}
1\leq \frac{m_1(r)} {m_2(r) }\leq \frac{b_1(r,r+1)}{b_2(r,r+1)}.
\end{equation}
Moreover, by assumption,
\begin{equation*}
 \frac{b_1(r,r+1)}{m_1(r+1)}=k_-^1(r+1) \leq  k_-^2(r+1)=\frac{b_2(r,r+1)}{m_2(r+1)},
\end{equation*}
so that, using \eqref{dis 2},
\begin{equation*}
m_1(r+1) \geq \frac{b_1(r,r+1)}{b_2(r,r+1)} m_2(r+1) \geq m_2(r+1),
\end{equation*}
as required to complete the proof.
\end{proof}

In the case where one of the two graphs considered in Theorem \ref{comparison} is a model, we get the following immediate corollary.

\begin{cor}
Let $ G =(V,b,m) $ be a graph and $ \tilde{G}=(\tilde{V}, \tilde{b}, \tilde{m} ) $ be a model graph. Assume that $ G $ has stronger (respectively, weaker) curvature growth than $ \tilde{G} $. Then $ G $ has faster (respectively, slower) volume growth than $ \tilde{G} $.
\end{cor}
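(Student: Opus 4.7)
The plan is to reduce the corollary immediately to Theorem~\ref{comparison} by verifying that the hypotheses of Definition~\ref{strongercurv} imply those of Definition~\ref{strongercurvnew} when one of the two graphs is a model. Once this is done, Theorem~\ref{comparison} applied to $G_1=G$ and $G_2=\tilde G$ yields the conclusion $m(S_r(x_0))\geq \tilde m(S_r(o))$ for every $r\geq 0$, i.e.\ faster volume growth.

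First I would observe that, since $\tilde G$ is a model, its inner and outer curvatures $\tilde k_\pm$ are spherically symmetric, so that the averaging operator $\mathcal{A}_{\tilde G}$ acts trivially on them: for every $\tilde x\in S_r(o)$ one has $\mathcal{A}_{\tilde G}\tilde k_\pm(\tilde x)=\tilde k_\pm(r)$. On the $G$ side, the hypothesis $k_+(x)\geq \tilde k_+(r)$ and $k_-(x)\leq \tilde k_-(r)$ for all $x\in S_r(x_0)$ can be averaged with weights $m(y)/m(S_r(x_0))$ (which is the averaging operator $\mathcal{A}_G$ from the paragraph preceding Definition~\ref{strongercurvnew}) to give
\begin{equation*}
\mathcal{A}_G k_+(x)\geq \tilde k_+(r)=\mathcal{A}_{\tilde G}\tilde k_+(\tilde x),\qquad
\mathcal{A}_G k_-(x)\leq \tilde k_-(r)=\mathcal{A}_{\tilde G}\tilde k_-(\tilde x),
\end{equation*}
for every $x\in S_r(x_0)$ and $\tilde x\in S_r(o)$, with the normalization condition $m(x_0)=\tilde m(o)$ carrying over unchanged from (i) of Definition~\ref{strongercurv} to (i) of Definition~\ref{strongercurvnew}. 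Hence $G$ has stronger average curvature growth than $\tilde G$ in the sense of Definition~\ref{strongercurvnew}, and Theorem~\ref{comparison} applies directly.

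For the parenthetical statement about weaker curvature growth and slower volume growth, I would simply exchange the roles of $G$ and $\tilde G$: the hypothesis that $G$ has weaker curvature growth than $\tilde G$ is exactly the statement that $\tilde G$ has stronger curvature growth than $G$ (with the normalization at $(o,x_0)$), so the previous argument yields $\tilde m(S_r(o))\geq m(S_r(x_0))$, i.e.\ slower volume growth of $G$.

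There is no real obstacle here; the only point to check is that the pointwise curvature inequalities pass through the weighted average, which is immediate since $\mathcal{A}$ is a convex combination, and that the spherical symmetry of the model $\tilde G$ makes the averaged curvatures on $\tilde G$ coincide with the pointwise ones. Thus the corollary is a direct consequence of Theorem~\ref{comparison} and the observation already recorded in the remark following Definition~\ref{strongercurvnew}.
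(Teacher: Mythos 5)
Your proposal is correct and follows exactly the route the paper intends: the paper states the corollary as an immediate consequence of Theorem~\ref{comparison}, having already noted in the remark after Definition~\ref{strongercurvnew} that pointwise curvature comparison implies average curvature comparison. You merely make explicit the (trivial) averaging step and the fact that $\mathcal{A}$ acts as the identity on the spherically symmetric curvatures of the model, which is precisely what the paper leaves implicit.
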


\begin{remark}
We want to underline the fact that, differently from what happens in the setting of Riemannian manifolds, it is not sufficient to have a comparison assumption on the Laplacian of the distance function to obtain a comparison result concerning volume growth. Indeed, note that the assumption of stronger curvature growth implies the weaker condition
\begin{equation*}
k_+(x) - k_-(x) \geq \tilde{k}_+(r) - \tilde{k}_-(r),
\end{equation*}
which is exactly equivalent to
\begin{equation*}
\Delta d(x_0,x) \leq \tilde{\Delta} d(0,r)
\end{equation*}
for all $ x \in S_r $ and $r\ge 0$.\\
However, such a condition is not enough to guarantee the conclusion of Theorem \ref{comparison} as the following example shows.
\end{remark}

\begin{exmp}\label{exmp 1}
Let $ G  =(V,b,m) $ be the unweighted birth-death chain, that is $ V = \mathbb{N}_0 $, $ m(r) = 1 $ and $ b(r,r+1) = 1 $ for all $ r \geq 0 $, and let $ G' = (\mathbb{N}_0, b',m') $ be a birth-death chain such that $ m'(r) = r+1 $ and $ b'(r,r+1) = (r+1)^{-2} $. It follows that
\begin{align*}
k_+(0) - k_-(0)&=1,\\
k_+(r)-k_-(r)  &\equiv 0 \text{ } \forall r  > 0
\end{align*}
while
\begin{align*}
k'_+(0) - k'_-(0)&=1,\\
k'_+(r)-k'_-(r) &= \frac{1}{(r+1)^3}- \frac{1}{r^2(r+1)}= \frac{-2r-1}{r^2(r+1)^3} < 0 \text{ } \forall r > 0,
\end{align*}
so that $ k'_+(r)-k'_-(r) \leq k_+(r)-k_-(r) $ for all $ r \geq 0 $.
On the other hand, it is clear that $ G' $ has faster volume growth than $ G $.
\end{exmp}

\begin{remark}
We want to stress the fact that, exactly as in the manifold case, in order to get a volume comparison the curvature inequality in the statement of Theorem \ref{comparison} must hold for every $r\geq 0$ and not just for all $r\ge R > 0 $ (see Definition \ref{strongcurvout} and \cite{RKW2}). This is shown in the following example.
\end{remark}

\begin{exmp}
Let $ \bar{G} $ be a birth-death chain and $ \tilde{G} = (\mathbb{Z}, \tilde{b},\tilde{m}) $ be a model graph such that
\begin{equation*}
\tilde{m}(x) =
%\begin{cases}
\bar{m}(r) \text{ if } |x|=r
%\\
%0 & \text{ otherwise}
%\end{cases}
\end{equation*}
and
\begin{equation*}
\tilde{b}(x,y)= \begin{cases}
\bar{b}(r,r+1) & \text{ if } |x|=r,|y|=r+1 \text{ and } |x-y|=1 \\
0 & \text{ otherwise}.
\end{cases}
\end{equation*}
It is clear that, for $ r \geq 1 $, $ \bar{G} $ and $ \tilde{G} $ have the same curvature growth, but clearly $ \tilde{G} $ has twice the volume.
\end{exmp}

The above example shows that a volume comparison result cannot hold assuming that the average inner and outer curvatures satisfy the appropriate inequalities only for sufficiently large values of $r$. The last result of this section shows that this assumption is enough to control  the volume growth.

We begin with a lemma which  relates the ratio of the volume of consecutive spheres to that of the inner and outer average curvatures.

\begin{lem}\label{vol growth}
Let $ G=(V,b,m) $ be a weighted graph and $ x_0 \in V $ be a fixed vertex. Denote, as usual $ S_r = S_r(x_0) $. Then
\begin{equation*}
m(S_{r+1}) \mathcal{A}k_-(r+1) = m (S_r) \mathcal{A}k_+(r)
\end{equation*}
\end{lem}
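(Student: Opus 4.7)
The plan is to prove the identity by a direct computation, expanding both sides using the formula for the averaging operator derived earlier in the paper (the one giving $\bar k_{\pm}(r) = \mathcal A k_{\pm}(x)$), and then invoking the symmetry of $b$ to see that the two double sums obtained coincide.

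More concretely, I would first recall from the discussion preceding Definition \ref{strongercurvnew} that for any $x \in S_r$,
\begin{equation*}
\mathcal A k_{+}(x) \;=\; \frac{1}{m(S_r)} \sum_{z \in S_r} k_{+}(z)\, m(z) \;=\; \frac{1}{m(S_r)} \sum_{\substack{z \in S_r \\ y \in S_{r+1}}} b(z,y),
\end{equation*}
since the factors $m(z)$ cancel the $1/m(z)$ hidden in $k_+(z)$. Multiplying through by $m(S_r)$ gives
\begin{equation*}
m(S_r)\, \mathcal A k_{+}(r) \;=\; \sum_{\substack{z \in S_r \\ y \in S_{r+1}}} b(z,y).
\end{equation*}

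Next, applying the same manipulation to $\mathcal A k_{-}$ at level $r+1$, I would note that for $x \in S_{r+1}$,
\begin{equation*}
\mathcal A k_{-}(x) \;=\; \frac{1}{m(S_{r+1})} \sum_{\substack{y \in S_{r+1} \\ z \in S_{r}}} b(y,z),
\end{equation*}
so that
\begin{equation*}
m(S_{r+1})\, \mathcal A k_{-}(r+1) \;=\; \sum_{\substack{y \in S_{r+1} \\ z \in S_{r}}} b(y,z).
\end{equation*}

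Finally, the symmetry $b(z,y) = b(y,z)$ (together with the fact that the two double sums are indexed by exactly the same set of ordered pairs $(z,y) \in S_r \times S_{r+1}$, up to relabelling) forces the right-hand sides of the two displays to be equal, yielding the desired identity. There is no real obstacle here; the only thing to watch is to keep straight which sphere supplies the denominator $m(S_\cdot)$ when expanding $\mathcal A k_{\pm}$, so that after multiplying it out one is left with the \emph{same} edge-weight sum on both sides.
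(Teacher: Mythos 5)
Your proof is correct and is in substance the same as the paper's: the paper simply passes to the associated birth-death chain and cites the identity $\bar m(r+1)\bar k_-(r+1)=\bar m(r)\bar k_+(r)$ from \cite{KLW}, which amounts to exactly your observation that both sides equal the total edge weight $\bar b(r,r+1)=\sum_{z\in S_r,\,y\in S_{r+1}}b(z,y)$. Your version just makes that computation explicit and self-contained rather than quoting the known model-graph identity.
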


\begin{proof}
The identity $ m(S_{r+1}) k_-(r+1) = m(S_r) k_+(r) $ is well known in the case of model graphs and, therefore, of birth-death chains (see, \cite{KLW}). The result follows by simply considering the birth-death chain associated with $ G $.
\end{proof}

\begin{defn}\label{strongcurvout}
Given two graphs $ G_1 $ and $ G_2 $, we say that $ G_1 $ has stronger average curvature growth outside of a finite set than $ G_2 $ if inequalities in Definition \ref{strongercurvnew} are satisfied for every $ r \geq R $, $ R > 0 $.
\end{defn}

\begin{thm}
\label{asympcomp}
Let $ G_1 $ and $ G_2 $ be two graphs. If $ G_1 $ has stronger average curvature growth outside of a finite set than $ G_2 $, then there exists $ C > 0 $ such that $ C m_1(S_r) \geq m_2(S_r) $ for every $ r \geq 0 $.
\end{thm}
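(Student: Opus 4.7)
The plan is to imitate the proof of Theorem \ref{comparison}, but only for spheres of index $r \geq R$, and to absorb the finitely many remaining spheres into the constant $C$.

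First, as in the proof of Theorem \ref{comparison}, observe that the associated birth-death chains have the same volume growth as the original graphs and their curvatures coincide with the averaged curvatures appearing in Definition \ref{strongercurvnew}; we may therefore replace $G_1$ and $G_2$ by $\bar G_1$ and $\bar G_2$ and assume throughout that both graphs are birth-death chains with $k_+^1(r) \geq k_+^2(r)$ and $k_-^1(r) \leq k_-^2(r)$ for every $r \geq R$.

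Second, apply Lemma \ref{vol growth} to each chain, which in the birth-death case is the elementary identity $m_i(r{+}1)\,k_-^i(r{+}1) = m_i(r)\,k_+^i(r)$. For $r \geq R$, the curvature hypotheses immediately yield
\[
\frac{m_1(r+1)}{m_1(r)} \;=\; \frac{k_+^1(r)}{k_-^1(r+1)} \;\geq\; \frac{k_+^2(r)}{k_-^2(r+1)} \;=\; \frac{m_2(r+1)}{m_2(r)},
\]
so the ratio $q(r) := m_1(r)/m_2(r)$ is non-decreasing on $\{r \geq R\}$. In particular, $q(r) \geq q(R)$ for all $r \geq R$.

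Finally, on the finite initial segment $\{0,1,\dots,R\}$ the ratio $q(r)$ takes only finitely many strictly positive values (in the standing setting where both graphs reach every sphere up to radius $R$, which is the case of interest), so $c := \min_{0 \leq r \leq R} q(r)$ is a positive real number. Setting $C := 1/c$ gives $C m_1(r) \geq m_2(r)$ trivially for $0 \leq r \leq R$, and for $r \geq R$ the monotonicity step above gives $q(r) \geq q(R) \geq c$, hence $C m_1(r) \geq m_2(r)$ as well. The only real technical wrinkle is ensuring the denominators $m_i(r)$ and $k_-^i(r+1)$ do not vanish at the relevant radii; this is automatic for connected graphs of infinite diameter — the regime in which Theorem \ref{asympcomp} is meant to be applied — since then every sphere is non-empty and every inner curvature is positive.
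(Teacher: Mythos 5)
Your proof is correct and follows essentially the same route as the paper: reduce to the associated birth-death chains, use Lemma \ref{vol growth} together with the curvature inequalities to show the sphere-volume ratio propagates for $r \geq R$, and absorb the finite initial segment into the constant $C$. The remark about non-vanishing denominators is a harmless extra precision that the paper leaves implicit.
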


\begin{proof}
Clearly, there exists $ C > 0 $ such that $ C m_1(S_r) \geq m_2(S_r) $ for all $ 0 \leq r \leq R $.\\
Using Lemma \ref{vol growth} and an easy inductive argument we have that, for every $ r \geq R $,
\begin{equation*}
C m_1(S_{r+1}) = C m_1(S_r) \frac{\mathcal{A}k_+^1(r)}{\mathcal{A}k_-^1(r+1)} \geq m_2(S_r) \frac{\mathcal{A}k_+^2(r)}{\mathcal{A}k_-^2(r+1)} = m_2(S_{r+1}),
\end{equation*}
completing the proof.
\end{proof}

\subsection{Ollivier curvature}
\label{OllCurv}

In this section we study some relationships between Ollivier curvature, inner and outer curvatures and volume growth. To do so, we will use expression \eqref{Oll curv} for the Ollivier curvature, which, as shown in  \cite{MW}, allows to compute the Ollivier curvature for a birth-death chain. This result can be adapted to the more general setting of model graphs using the notion of sphere curvatures of a graph that we define below following \cite{MW}. This is not surprising considering that, as we have seen so far, when dealing with curvatures we were always able to obtain comparison results by reducing the investigation to birth-death chains. Significant differences with respect to this situation will be considered at the end of this subsection.\\
We start this subsection with a definition and a couple of results, which will be useful in our discussion. They  are taken from \cite{MW} to which we refer for the proofs.

\begin{defn}
Let $ G=(V,b,m) $ be a graph and $ x_0 \in V $ be a fixed vertex. With the usual abuse of notation we write $ S_r= S_r(x_0) $. For every $ r \geq 1 $ we define the sphere curvatures $ k(r) $ with respect to $ x_0 $ as
\begin{equation*}
k(r) := \min_{y \in S_r} \max_{x \in S_{r-1} \atop x \sim y} k(x,y),
\end{equation*}
where $ k(x,y) $ is the Ollivier curvature as defined in \eqref{Oll curv}.
\end{defn}

\begin{prop}\label{Oll curv birth}(Theorem 2.10 in \cite{MW})
Let $ G=(\mathbb{N}_0,b,m) $ be a birth-death chain and let $ f(r):=d(0,r)=r $. Then, for $ 0 \leq r < R $,
\begin{equation*}
\begin{split}
k(r,R) & = \nabla_{rR} \Delta f = \frac{\Delta f(R)-\Delta f(r)}{R-r} \\
& = \frac{b\left(R,R-1\right)-b\left(R,R+1\right)}{\left( R-r \right)m(R)}- \frac{b\left(r,r-1\right)-b\left(r,r+1\right)}{\left(R-r\right)m(r)},
\end{split}
\end{equation*}
where by convention we set $ b\left(r,r-1\right) := 0 $ if $ r = 0 $.
\end{prop}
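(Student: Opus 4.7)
The plan is to use the characterization \eqref{Oll curv} of the Ollivier curvature and to show that, for a birth-death chain, the candidate infimum is realized by the distance function $f(r) = d(0,r) = r$ itself.

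First I would check the two routine ingredients: since in a birth-death chain $d(r,R) = |R-r|$, the function $f(r) = r$ is trivially in $\mathrm{Lip}(1)$ and satisfies $\nabla_{rR} f = (R-r)/(R-r) = 1$, so it is a competitor in the variational problem. A direct computation using only that $b(r,y)=0$ unless $y=r\pm 1$ then gives
\begin{equation*}
\Delta f(r) = \frac{1}{m(r)}\bigl[b(r,r-1)(f(r)-f(r-1)) + b(r,r+1)(f(r)-f(r+1))\bigr] = \frac{b(r,r-1) - b(r,r+1)}{m(r)},
\end{equation*}
(with the convention $b(0,-1)=0$), and substituting into $\nabla_{rR}\Delta f$ produces the stated formula. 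So it remains to prove that this value is actually the infimum over all Lip(1) competitors.

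The heart of the argument is therefore to show that for every $g \in \mathrm{Lip}(1)$ with $\nabla_{rR} g = 1$ one has $\nabla_{rR}\Delta g \ge \nabla_{rR}\Delta f$. The normalization $g(R)-g(r) = R-r$ combined with the 1-Lipschitz property and the triangle inequality along the path $r \sim r+1 \sim \dots \sim R$ forces $g$ to be affine of slope $1$ on the integers in $[r,R]$, i.e.\ $g(k+1)-g(k) = 1$ for $r \le k \le R-1$. In particular, $g(R) - g(R-1) = 1$ and $g(r+1) - g(r) = 1$. Meanwhile, the Lip(1) bound gives $g(R+1) - g(R) \le 1$ and $g(r) - g(r-1) \le 1$, i.e.\ $g(R) - g(R+1) \ge -1$ and $g(r) - g(r-1) \le 1$ (with the boundary case $r=0$ handled by $b(0,-1)=0$).

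Plugging these inequalities into
\begin{equation*}
\Delta g(R) = \frac{b(R,R-1)(g(R)-g(R-1)) + b(R,R+1)(g(R)-g(R+1))}{m(R)}
\end{equation*}
and the analogous expression at $r$ yields $\Delta g(R) \ge \Delta f(R)$ and $\Delta g(r) \le \Delta f(r)$, so that
\begin{equation*}
\nabla_{rR}\Delta g = \frac{\Delta g(R) - \Delta g(r)}{R-r} \ge \frac{\Delta f(R) - \Delta f(r)}{R-r} = \nabla_{rR}\Delta f,
\end{equation*}
showing that $f$ attains the infimum, as required. The main obstacle is the rigidity step, namely extracting the affine structure of an arbitrary competitor $g$ on $\{r, r+1, \dots, R\}$ from the two conditions $g(R)-g(r)=R-r$ and $|g(k)-g(k+1)|\le 1$; once this is in hand, the one-dimensional nature of the birth-death chain makes the remaining estimates entirely mechanical.
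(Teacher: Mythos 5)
Your argument is correct and complete. The two key points both check out: (a) the rigidity step is sound, since $R-r=g(R)-g(r)=\sum_{k=r}^{R-1}\bigl(g(k+1)-g(k)\bigr)$ with each summand at most $1$ forces every summand to equal $1$; and (b) the resulting one-sided estimates $\Delta g(R)\geq \Delta f(R)$ and $\Delta g(r)\leq \Delta f(r)$ follow because the only unconstrained increments, $g(R+1)-g(R)\leq 1$ and $g(r)-g(r-1)\leq 1$, enter $\Delta g(R)$ and $\Delta g(r)$ with the favorable sign (and the boundary case $r=0$ is handled correctly by the convention $b(0,-1)=0$, where in fact $\Delta g(0)=\Delta f(0)$ exactly). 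Since $f$ is itself an admissible competitor, the infimum in \eqref{Oll curv} is attained at $f$, which is precisely the stated formula. Note that the paper does not prove this proposition at all: it is quoted verbatim from Theorem 2.10 of \cite{MW}, to which the authors refer for the proof. Your proof is the natural direct one via the Laplacian characterization \eqref{Oll curv} and is essentially the argument used in \cite{MW}, so you have supplied a self-contained justification of a result the paper only cites.
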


\begin{prop}\label{Oll curv comp}(Corollary 4.8 in \cite{MW})
Let $ G=(V,b,m) $ be a graph, $ x_0 \in V $ be a fixed vertex and $ k(r) $ be the sphere curvatures with respect to $ x_0 $. Let $ \bar{G}= \left( \mathbb{N}_0, \bar{b},\bar{m} \right) $ be its associated birth-death chain with root vertex $ o $ and sphere curvatures $ \bar{k}(r) = \bar{k}(r-1,r) $. Then
\begin{equation*}
\sum_{r=1}^{R} \bar{k}(r) \geq \sum_{r=1}^{R} k(r)
\end{equation*}
for all $ R \geq 1 $.
\end{prop}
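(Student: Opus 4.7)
The plan is to convert the Ollivier curvatures on edges between adjacent spheres into bounds on differences of $\Delta d$, where $d = d(x_0, \cdot)$, via the Laplacian formulation \eqref{Oll curv}, and then reduce the right-hand side to a telescoping sum using Proposition \ref{Oll curv birth}. For every edge $(x,y)$ with $x \in S_{r-1}$, $y \in S_r$, $x \sim y$, one has $\nabla_{xy} d = 1$ and $d \in \mathrm{Lip}(1)$, so \eqref{Oll curv} yields the pointwise bound
\[
k(x,y) \le \Delta d(y) - \Delta d(x),
\]
with $\Delta d(z) = k_-(z) - k_+(z)$ for every $z \in V$ (and $\Delta d(x_0) = -k_+(x_0)$). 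On the birth-death chain side, Proposition \ref{Oll curv birth} applied with $f(r) = r$ yields $\bar k(r) = \bar\Delta f(r) - \bar\Delta f(r-1)$, and the averaging identities $\bar k_\pm(r) = \mathcal{A}k_\pm(r)$ derived in Section 3 give $\bar\Delta f(r) = \mathcal{A}\Delta d(r)$. Summing telescopically,
\[
\sum_{r=1}^R \bar k(r) = \mathcal{A}\Delta d(R) + k_+(x_0),
\]
so the claim reduces to proving $\sum_{r=1}^R k(r) \le \mathcal{A}\Delta d(R) + k_+(x_0)$.

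For each $r \ge 1$, combining the min--max definition of $k(r)$ with the Laplacian bound above gives, for any $y_r \in S_r$,
\[
k(r) \le \Delta d(y_r) - \min_{x \sim y_r,\, x \in S_{r-1}} \Delta d(x).
\]
The natural strategy is to pick the $y_r$'s inductively along a descending geodesic $x_0 = y_0 \sim y_1 \sim \cdots \sim y_R$: then $y_{r-1}$ is admissible for the inner minimum, the bounds telescope to $\Delta d(y_R) - \Delta d(x_0) = \Delta d(y_R) + k_+(x_0)$, and choosing $y_R$ so that $\Delta d(y_R) \le \mathcal{A}\Delta d(R)$ (e.g., a minimizer over $S_R$) would close the argument.

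The principal obstacle is that the termwise inequality $k(r) \le \bar k(r)$ is \emph{not} generally true: already on small birth-death chains with a single additional branch near the root, one can produce spheres where $k(r) > \bar k(r)$, balanced out only by slack in other spheres. Consequently the argument cannot be carried out pointwise and must instead exploit cumulative cancellation. The descending path $(y_r)$ must be selected adaptively so that the inner minima $\min_{x \sim y_r}\Delta d(x)$ align with the averaged quantities $\mathcal{A}\Delta d(r-1)$ \emph{in the aggregate}, even when they fail to do so sphere by sphere. Showing that such a compatible sequence always exists (or, equivalently, averaging over a suitable family of descending paths so that the integrated error is nonnegative) is the delicate step and the one where the genuine work of \cite{MW} lies.
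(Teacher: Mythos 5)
The paper does not actually prove this proposition --- the text states that it and Proposition~\ref{Oll curv birth} ``are taken from \cite{MW} to which we refer for the proofs'' --- so your argument must stand on its own, and as written it does not: you stop exactly where you declare that ``the genuine work of \cite{MW} lies,'' so no proof has been produced. The real problem, however, is that the obstacle you invoke there is illusory; the strategy you yourself set up already closes. Put $u(r):=\min_{y\in S_r}\Delta d(y)$, where $d=d(x_0,\cdot)$ and $\Delta d(y)=k_-(y)-k_+(y)$. For any $y\in S_r$ and any $x\in S_{r-1}$ with $x\sim y$ you have, as you note, $k(x,y)\le \Delta d(y)-\Delta d(x)\le \Delta d(y)-u(r-1)$; taking the maximum over such $x$ and then the minimum over $y\in S_r$ gives $k(r)\le u(r)-u(r-1)$. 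Summing telescopes to
\begin{equation*}
\sum_{r=1}^{R}k(r)\;\le\;u(R)-u(0)\;=\;u(R)+k_+(x_0).
\end{equation*}
The comparison with the average is needed only at the single top level $R$: $u(R)\le \mathcal{A}\Delta d(R)=\bar{k}_-(R)-\bar{k}_+(R)=\bar{\Delta} f(R)$ with $f(r)=r$, while $u(0)=\Delta d(x_0)=-k_+(x_0)=\bar{\Delta} f(0)$ holds with equality because $S_0=\{x_0\}$. Combined with the identity $\sum_{r=1}^R\bar{k}(r)=\bar{\Delta} f(R)-\bar{\Delta} f(0)$ from Proposition~\ref{Oll curv birth}, which you already derived, this gives the claim.

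Your worry --- that $k(r)\le\bar{k}(r)$ can fail termwise, so the descending path must be ``adaptively aligned with the averages in the aggregate'' --- is correct in its premise but irrelevant to the conclusion: the intermediate quantities $u(1),\dots,u(R-1)$ cancel identically in the telescoping sum, so they never need to be compared with $\mathcal{A}\Delta d(1),\dots,\mathcal{A}\Delta d(R-1)$ at all. The minimum-versus-average step occurs exactly once, at $r=R$, and at $r=0$ minimum and average coincide. Equivalently, in your path formulation: choose $y_R$ minimizing $\Delta d$ over $S_R$ and, descending, let $y_{r-1}$ be a neighbor of $y_r$ in $S_{r-1}$ minimizing $\Delta d$ among such neighbors; the bounds $k(r)\le\Delta d(y_r)-\Delta d(y_{r-1})$ then telescope with no compatibility condition to verify. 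With this one observation added, your argument becomes a complete and elementary proof.
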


Using these notation we have the following proposition. It is a convenient reformulation of \cite[Theorem 4.4]{MW} and we provide a direct proof.

\begin{prop}\label{curv comp}
Let $ \tilde{G} $ and $ G $ be two birth-death chain and $ \tilde{k}(r) = \tilde{k}(r-1,r) $, $ k(r) = k(r-1,r) $ their respective sphere curvatures for all $ r \geq 1 $. Suppose that $ k_+(0) = \tilde{k}_+(0) $. Then the following are equivalent:
\begin{itemize}
\item[(i)] $ \sum_{r=1}^{R} \tilde{k}(r) \leq \sum_{r=1}^{R} k(r) $ for all $ R \geq 1 $,
\item[(ii)] $ \tilde{k}_+(R) - \tilde{k}_-(R) \geq k_+(R) - k_-(R) $ for all $ R \geq 1 $.
\end{itemize}
\end{prop}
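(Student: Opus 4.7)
The plan is to use Proposition~\ref{Oll curv birth} to write each one-step Ollivier curvature as a discrete derivative of $\Delta f$ where $f(r)=r$, and then telescope.

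First, specialize Proposition~\ref{Oll curv birth} to consecutive vertices: with $R-r=1$, one obtains
\begin{equation*}
k(r-1,r)\;=\;\Delta f(r)-\Delta f(r-1)
\end{equation*}
for every $r\ge 1$, where $f(s)=s$. Summing from $r=1$ to $r=R$ telescopes to
\begin{equation*}
\sum_{r=1}^{R}k(r)\;=\;\Delta f(R)-\Delta f(0).
\end{equation*}
Next I would evaluate $\Delta f$ on a birth-death chain directly from the definition. For $s\ge 1$, only the nearest neighbors contribute, so
\begin{equation*}
\Delta f(s)=\frac{b(s,s-1)(f(s)-f(s-1))+b(s,s+1)(f(s)-f(s+1))}{m(s)}=k_-(s)-k_+(s),
\end{equation*}
while for $s=0$ the convention $b(0,-1):=0$ together with $k_-(0)=0$ gives $\Delta f(0)=-k_+(0)$. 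Substituting back into the telescoped sum yields the key identity
\begin{equation*}
\sum_{r=1}^{R}k(r)\;=\;k_+(0)+k_-(R)-k_+(R),
\end{equation*}
and the analogous formula holds with every quantity decorated by a tilde.

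The equivalence is now immediate. Assuming $k_+(0)=\tilde k_+(0)$, subtracting the two identities gives
\begin{equation*}
\sum_{r=1}^{R}k(r)-\sum_{r=1}^{R}\tilde k(r)\;=\;\bigl(k_-(R)-k_+(R)\bigr)-\bigl(\tilde k_-(R)-\tilde k_+(R)\bigr)\;=\;\bigl(\tilde k_+(R)-\tilde k_-(R)\bigr)-\bigl(k_+(R)-k_-(R)\bigr),
\end{equation*}
so condition (i) at level $R$ (i.e.\ the left-hand side is nonnegative) is literally the same inequality as (ii) at level $R$. In particular (i) and (ii) hold simultaneously for every $R\ge 1$, and neither direction of the equivalence needs any induction beyond the telescoping already performed.

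Honestly, there is no serious obstacle here: the only place that requires attention is the boundary value $\Delta f(0)$, where the missing term $b(0,-1)$ must be set to zero and $k_-(0)=0$ invoked, so that the telescoped sum absorbs precisely the $k_+(0)$ normalization used in the hypothesis. Once this is handled cleanly, the proposition reduces to a two-line algebraic rearrangement of the telescoped identity.
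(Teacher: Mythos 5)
Your proof is correct and follows essentially the same route as the paper: both specialize Proposition~\ref{Oll curv birth} to consecutive vertices, telescope the resulting sum (the paper telescopes $t(r)=k_+(r)-k_-(r)$, which is just $-\Delta f(r)$, so the computations are identical), and read off the equivalence from the identity $\sum_{r=1}^{R}k(r)=k_+(0)-\left(k_+(R)-k_-(R)\right)$. Your handling of the boundary term $\Delta f(0)=-k_+(0)$ matches the paper's convention $k_-(0)=0$, and your observation that no separate $R=1$ case is needed is a minor streamlining of the paper's argument.
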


\begin{proof}
In order to prove the proposition, note that by Proposition \ref{Oll curv birth} the Ollivier sphere curvatures for a birth-death chain can be computed as
\begin{equation*}
k(r) = k(r-1,r) = \Delta f (r) - \Delta f(r-1) = k_-(r) - k_+(r) + k_+(r-1) - k_-(r-1)
\end{equation*}
for all $ r \geq 1 $, with $ f(r) = d(0,r) = r $ and the convention that $ k_-(0) = 0 $.\\

Case $ R = 1 $: the condition $ \tilde{k}(1) \leq k(1) $ is equivalent to
\begin{equation*}
\tilde{k}_+(0) - \tilde{k}_+(1) + \tilde{k}_-(1) \leq k_+(0) - k_+(1) + k_-(1),
\end{equation*}
which in turn is equivalent to $(ii)$ for $ R = 1 $.\\

Case $ R > 1 $: We define $ t(r) := k_+(r) - k_-(r) $ and similarly $ \tilde{t}(r) $ for all $ r \geq 0 $. Then we have
\begin{equation*}
\begin{split}
\sum_{r=1}^{R} \tilde{k}(r) & = \sum_{r=1}^{R} (\tilde{t}(r-1) -\tilde{t}(r)) \\
& = \tilde{t}(0) - \tilde{t}(R) \\
& = \tilde{k}_+(0) -\tilde{t}(R)
\end{split}
\end{equation*}
and
\begin{equation*}
\sum_{r=1}^{R} k(r) = k_+(0) - t(R),
\end{equation*}
so that
\begin{equation*}
\sum_{r=1}^{R} \tilde{k}(r) \leq \sum_{r=1}^{R} k(r)
\end{equation*}
is equivalent to
\begin{equation*}
\tilde{t}(R) = \tilde{k}_+(R) -\tilde{k}_-(R) \geq k_+(R) -k_-(R) = t(R),
\end{equation*}
completing the proof.
\end{proof}

\begin{remark}
By the above proposition, under the normalization assumption $ \tilde{k}_+(0) = k_+(0) $, we have that $ \tilde{k}(r) \leq k(r) $ for all $ r \geq 1 $ implies condition (ii) in the statement.
Further, the proposition tells us that condition (i) is not enough to imply a volume comparison result for birth-death chains, and hence for general graphs too (as we showed in Example \ref{exmp 1}).
\end{remark}

We now use Proposition \ref{Oll curv comp} and Proposition \ref{curv comp} to prove Theorem \ref{compcurv} below, which extends Proposition \ref{curv comp} to the most general possible situation. Note that the first half of the statement below generalizes \cite[Corollary 4.8]{MW}. This, in turn, will lead us to find an easy way to compute sphere curvatures on model graphs, extending Theorem 2.10 of \cite{MW}.

\begin{thm}\label{compcurv}
Let $ \tilde{G} $ be a birth-death chain and $ G $ a graph such that $ \tilde{k}_+(0) = k_+(0) $.
\begin{itemize}
\item[i)] If
\begin{equation}
\label{ineq}
\tilde{k}_+(R)-\tilde{k}_-(R) \leq k_+(x) - k_-(x) \text{ for all } x \in S_R, \text{ for all } R,
\end{equation}
then
\begin{equation*}
\sum_{r=1}^{R} \tilde{k}(r) \geq \sum_{r=1}^{R} k(r) \text{ for all } R \geq 1.
\end{equation*}
\item[ii)]If
\begin{equation*}
\sum_{r=1}^{R} \tilde{k}(r) \leq \sum_{r=1}^{R} k(r) \text{ for all } R \geq 1
\end{equation*}
then
\begin{equation*}
\tilde{k}_+(R)-\tilde{k}_-(R) \geq \min_{x \in S_R} \left( k_+(x) - k_-(x) \right) \text{ for all } R.
\end{equation*}
\end{itemize}
\end{thm}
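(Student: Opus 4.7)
The plan is to reduce both parts to comparisons between $\tilde{G}$ and the birth-death chain $\bar{G}$ associated with $G$, using Proposition \ref{Oll curv comp} as the bridge between $G$ and $\bar{G}$ (on the level of summed sphere curvatures) and Proposition \ref{curv comp} as the bridge between $\bar{G}$ and $\tilde{G}$ (which converts summed sphere curvature inequalities to inner/outer curvature inequalities, since both are birth-death chains). A preliminary check is that the normalization hypothesis of Proposition \ref{curv comp} applied to the pair $(\tilde G,\bar G)$ holds: indeed $S_0=\{x_0\}$, so $\bar k_+(0)=\mathcal{A}k_+(x_0)=k_+(x_0)=k_+(0)=\tilde k_+(0)$.

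For part (i), I first average the pointwise hypothesis $\tilde k_+(R)-\tilde k_-(R)\le k_+(x)-k_-(x)$ over $x\in S_R$ with respect to the measure $m$ to obtain
$$\tilde k_+(R)-\tilde k_-(R)\le \mathcal{A}(k_+-k_-)(R)=\bar k_+(R)-\bar k_-(R).$$
This is exactly condition (ii) of Proposition \ref{curv comp} for the pair $(\tilde G,\bar G)$, so applying that proposition I get $\sum_{r=1}^R \tilde k(r)\ge \sum_{r=1}^R \bar k(r)$. Combining with $\sum_{r=1}^R \bar k(r)\ge \sum_{r=1}^R k(r)$ from Proposition \ref{Oll curv comp} yields the desired inequality.

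For part (ii), I reverse the chain. Proposition \ref{Oll curv comp} gives $\sum_{r=1}^R \bar k(r)\ge \sum_{r=1}^R k(r)$, which combined with the hypothesis gives $\sum_{r=1}^R \tilde k(r)\le \sum_{r=1}^R \bar k(r)$. Proposition \ref{curv comp} then converts this back to
$$\tilde k_+(R)-\tilde k_-(R)\ge \bar k_+(R)-\bar k_-(R)=\mathcal{A}(k_+-k_-)(R).$$
Since any weighted average dominates its minimum, the right-hand side is bounded below by $\min_{x\in S_R}(k_+(x)-k_-(x))$, completing the argument.

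The whole proof is essentially a bookkeeping exercise, composing two previously established results with a pointwise-to-average step obtained by integrating against $m$; the only mild subtlety is getting the direction of each inequality right, since the equivalence in Proposition \ref{curv comp} reverses the sign between summed sphere curvatures and the quantity $k_+-k_-$. There is no substantial obstacle.
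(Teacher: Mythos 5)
Your proof is correct and follows essentially the same route as the paper: integrate the pointwise hypothesis over $S_R$ to pass to the associated birth-death chain $\bar{G}$, then combine Proposition \ref{curv comp} (applied to the pair $\tilde{G}$, $\bar{G}$, with the roles/inequalities reversed as the equivalence permits) with Proposition \ref{Oll curv comp}, and finish part (ii) by bounding the average below by the minimum. Your explicit check that $\bar{k}_+(0)=k_+(0)=\tilde{k}_+(0)$ is a small point the paper leaves implicit.
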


\begin{proof}
Let $ \bar{G} $ be the birth-death chain associated to $ G $ and let $ \bar{k}_+ $ and $ \bar{k}_- $ denote its outer and inner curvatures respectively.\\
By integrating over $ S_R $ with respect to $ m $  inequality \eqref{ineq} and dividing by $ m(S_R) $, we get that
\begin{equation*}
\tilde{k}_+(R) - \tilde{k}_-(R) \leq \bar{k}_+(R) - \bar{k}_-(R) \text{ for all } R.
\end{equation*}
Theorem \ref{curv comp} combined with Proposition \ref{Oll curv comp} implies that
\begin{equation*}
\sum_{r=1}^{R} \tilde{k}(r) \geq \sum_{r=1}^{R} \bar{k}(r) \geq \sum_{r=1}^{R} k(r),
\end{equation*}
which is the first part of the theorem.\\
For the second part, using the hypothesis and Proposition \ref{Oll curv comp}, we immediately get that
\begin{equation*}
\sum_{r=1}^{R} \tilde{k}(r) \leq \sum_{r=1}^{R} \bar{k}(r),
\end{equation*}
which is equivalent, using Theorem \ref{curv comp}, to
\begin{equation*}
\tilde{k}_+(R) -\tilde{k}_-(R) \geq \bar{k}_+(R) -\bar{k}_-(R),
\end{equation*}
yielding, clearly, the desired conclusion.
\end{proof}

\begin{remark}
We note that, if $ G $ is a model graph and $ \tilde{G} $ is a birth-death chain, since $ k_+ $ and $ k_- $ are spherically symmetric functions, using the above theorem, the condition $ k_+(R) - k_-(R) \leq \tilde{k}_+(R) - \tilde{k}_-(R) $ is equivalent to $ \sum_{r=1}^{R} k(r) \geq \sum_{r=1}^{R} \tilde{k}(r) $. In particular, applying this consideration to the birth-death chain $ \tilde{G} $ associated to $ G $ we obtain the following result.
\end{remark}

\begin{thm}
Let $ G $ be a model graph and $ \tilde{G} $ its associated birth-death chain. Then
\begin{equation*}
\sum_{r=1}^{R} k(r) = \sum_{r=1}^{R} \tilde{k}(r),
\end{equation*}
for all $ R \geq 1 $, so that
\begin{equation*}
k(r) = \min_{y \in S_r} \max_{x \in S_{r-1} \atop x \sim y} k(x,y) = \tilde{k}(r)
\end{equation*}
for all $ r \geq 1 $.
\end{thm}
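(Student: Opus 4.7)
The plan is to combine the spherical symmetry of the inner and outer curvatures of the model $G$ with the equivalence highlighted in the preceding remark, in order to sandwich the partial sums of sphere Ollivier curvatures from both sides, and then to deduce the termwise identity by subtracting consecutive partial sums. Since $G$ is a model graph, $k_+$ and $k_-$ are constant on each sphere $S_r(x_0)$; the averaging computation performed after Definition~\ref{strongercurvnew} therefore collapses to the identity $\tilde{k}_\pm(r) = \mathcal{A} k_\pm(x) = k_\pm(r)$, and in particular
\begin{equation*}
\tilde{k}_+(R) - \tilde{k}_-(R) = k_+(R) - k_-(R) \qquad \text{for every } R \geq 0.
\end{equation*}

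Next I apply the equivalence of the preceding remark, which asserts that, for a model graph $G$ and a birth-death chain $\tilde{G}$, the condition $k_+(R) - k_-(R) \leq \tilde{k}_+(R) - \tilde{k}_-(R)$ for every $R$ is equivalent to $\sum_{r=1}^R k(r) \geq \sum_{r=1}^R \tilde{k}(r)$ for every $R$. The equality above satisfies the curvature inequality and its reverse simultaneously, so the equivalence yields both
\begin{equation*}
\sum_{r=1}^R k(r) \geq \sum_{r=1}^R \tilde{k}(r) \quad \text{and} \quad \sum_{r=1}^R k(r) \leq \sum_{r=1}^R \tilde{k}(r)
\end{equation*}
for every $R \geq 1$, hence the partial-sum identity $\sum_{r=1}^R k(r) = \sum_{r=1}^R \tilde{k}(r)$. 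Subtracting this identity at $R-1$ from the one at $R$ produces the pointwise equality $k(R) = \tilde{k}(R)$, which, together with the definition of $k(r)$ as $\min_{y \in S_r} \max_{x \in S_{r-1},\, x \sim y} k(x,y)$, yields the second displayed formula in the statement.

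The delicate step is invoking the equivalence from the remark: Theorem~\ref{compcurv}(i) directly supplies one direction ($\tilde{k}_+ - \tilde{k}_- \leq k_+ - k_-$ implies $\sum \tilde{k}(r) \geq \sum k(r)$), but the reverse implication is not immediate from Theorem~\ref{compcurv} alone. The remark bridges this gap by going through the birth-death chain $\bar{G}$ associated to $G$ (whose curvatures already coincide with those of $G$) and invoking the genuine equivalence of Proposition~\ref{curv comp} together with Proposition~\ref{Oll curv comp}. Ensuring that both halves of the equivalence are cleanly available is the main obstacle; once this is unlocked, the chain of implications above closes in a purely formal way.
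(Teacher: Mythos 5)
Your reduction to the identity $\tilde{k}_\pm(r)=k_\pm(r)$ for a model graph is correct, and the final step of subtracting consecutive partial sums is harmless. The problem sits exactly at the point you yourself flag as ``the delicate step'', and it is not unlocked by the results you invoke. You need both $\sum_{r=1}^{R}\tilde{k}(r)\geq\sum_{r=1}^{R}k(r)$ and the reverse inequality. The first is Proposition~\ref{Oll curv comp} (or Theorem~\ref{compcurv}(i) applied with equality of $k_+-k_-$ and $\tilde{k}_+-\tilde{k}_-$). But nothing available yields $\sum_{r=1}^{R}k(r)\geq\sum_{r=1}^{R}\tilde{k}(r)$: Proposition~\ref{Oll curv comp} is one-sided and points the wrong way; Proposition~\ref{curv comp} compares two birth-death chains and is vacuous once $\tilde{G}$ is taken to be the chain associated with $G$; and Theorem~\ref{compcurv}(ii) is the implication ``$\sum\tilde{k}\leq\sum k$ implies the curvature inequality'', i.e.\ the converse of what you need. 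The ``equivalence'' in the remark preceding the theorem (which is also all the paper offers by way of proof) silently contains the unproved direction ``$k_+-k_-\leq\tilde{k}_+-\tilde{k}_-$ implies $\sum k\geq\sum\tilde{k}$'', and your appeal to it inherits that gap; your proposed bridge via $\bar{G}$ only reproduces the inequality $\sum\tilde{k}\geq\sum k$ a second time.

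The missing inequality is in fact false, and the paper's own seven-vertex example placed right after the theorem witnesses this. There the graph is a model, $y\in S_2$ has the single $S_1$-neighbour $x$, and the exhibited Lipschitz function gives $k(x,y)\leq-1$; hence $k(2)=\min_{v\in S_2}\max_{u\in S_1,\,u\sim v}k(u,v)\leq-1$, while the associated birth-death chain has $\tilde{k}(2)=\bigl(\tilde{k}_-(2)-\tilde{k}_+(2)\bigr)-\bigl(\tilde{k}_-(1)-\tilde{k}_+(1)\bigr)=0-(-1)=1$. Since also $k(1)\leq\nabla_{wx}\Delta d=1=\tilde{k}(1)$, one gets $\sum_{r=1}^{2}k(r)\leq 0<2=\sum_{r=1}^{2}\tilde{k}(r)$, so both the partial-sum identity and the termwise equality fail. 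Any correct version of the statement must supply the lower bound $k(r)\geq\tilde{k}(r)$ by an extra hypothesis --- e.g.\ genuine spherical symmetry, under which every $k(u,v)$ with $u\in S_{r-1}$, $v\in S_r$, $u\sim v$ equals $\tilde{k}(r)$, as the paper notes --- rather than by the comparison machinery of Section~3, which only ever bounds $\sum k(r)$ from above by $\sum\tilde{k}(r)$.
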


\begin{remark}
Note that the previous result is trivial in the case of spherically symmetric graphs, i.e. graphs such that, for every $ x, x' \in S_r $, there exists an automorphism of weighted graphs which sends $ x $ to $ x' $. Indeed, in this case, using the symmetry of the graph and  \eqref{Oll curv}, it follows   that $k(x,y)=k(x',y')=k(r)$ for every fixed $ x,x' \in S_{r-1} $ and  $y, y '\in S_r $ with $x\sim y$, $x'\sim y'$.
However the last equality is not true on general model graphs as we show in the following example. It would be interesting to investigate if there are other situations in which this  equality holds. 
\end{remark}

\begin{exmp}
Consider the graph
\begin{figure}[h]
	\begin {center}
	\begin {tikzpicture}[-latex,auto,node distance =1.5 cm and 1cm,on grid,semithick,state/.style={circle,top color =white,draw,minimum width =1 cm}]
		
	\node[state] (w) at (1, 10) {$w$};
	\node[state] (x') at (3, 12) {$x'$};
	\node[state] (x) at (3, 8) {$x$};
	\node[state] (y') at (6,12)  {$y'$};
	\node[state] (y) at (6,8) {$y$};
	\node[state] (z) at (9,8) {$z$};
	\node[state] (z') at (9,12) {$z'$};

	\path (w) edge node[] {$ 1 $} (x');
	\path (w) edge node[] {$ 1 $} (x);
	\path (x') edge node[] {$ 2 $} (y');
	\path (x) edge node[] {$ 1 $} (y);
	\path (y) edge node[] {$ 1 $} (z);
	\path (y') edge node[] {$ 3 $} (z');
	\path (x) edge node[] {$ 1 $} (y');

	\path (y') edge node[] {} (x);
	\path (x') edge node[] {} (w);
	\path (x) edge node[] {} (w);
	\path (y') edge node[] {} (x');
	\path (y) edge node[] {} (x);
	\path (z) edge node[] {} (y);
	\path (z') edge node[] {} (y');
	{};
\end{tikzpicture}
\caption{A non-spherically symmetric model graph.}
\end{center}
\end{figure}
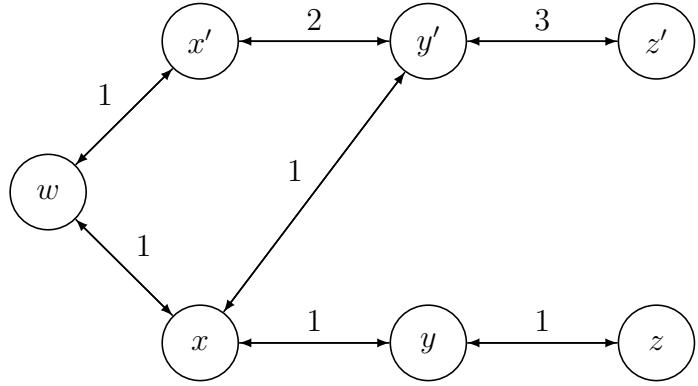

with edge weight $ b $ as in figure and measure $ m $ defined as
\begin{equation*}
m(t) = \begin{cases}
3 & \text{ if } t=y',z' \\
1 & \text{ otherwise.}
\end{cases}
\end{equation*}
By direct computation we see that the graph is model and that $ k(x,y) = -1 $. Indeed, a function which minimizes $ \Delta f(y) - \Delta f(x) $, subject to the conditions that $ f $ is $ \mathbb{Z}$-valued, Lip$(1)$ and $ f(y)-f(x)=1$, is given by 
\begin{equation*}
f(w)=f(y')=-1, \, f(x)=0, \, f(y)=1, \, f(z)=2.
\end{equation*}
This actually also follows from Example 2.3 in \cite{MW}.
On the other hand, $ k(x',y') = 1 $ since, for every $ \mathbb{Z} $-valued function $ g \in $ Lip$(1)$ with $ g(y')=1, \, g(x') = 0 $, we have
\begin{equation*}
\Delta g(y')-\Delta g(x')= 4 - g(z') + g(w) -\frac{1}{3} g(x),
\end{equation*}
which is minimized by choosing
$$
g(w)=-1, \, g(x)=0, \, g(z')=2.
$$
\end{exmp}

We end this subsection showing that, in general, even if two graphs have the same Ollivier curvature $ k(r) $ for every $ r $ it is not possible to conclude that they have the same volume growth. This is somehow interesting, being a sign that, even for birth-death chains, the Ollivier curvature is not capable of controlling the volume of spheres (see Proposition 3.5 and Remark 3.6 in \cite{HMW} for similar results).

\begin{exmp}\label{example curv Oll}
We consider again the unweighted birth-death chain $ G $ of Example \ref{exmp 1}. We want to construct a birth-death chain $ G' $ with $ k'_+(0)= 1 $, $ m'(0)=1 $ and such that $ k'(r) = k(r) $ and $ m'(r) \geq m(r) $ for every $ r $.
\begin{equation*}
k'(1) = k(1)
\end{equation*}
is equivalent to
\begin{equation*}
k'_+(0) - k'_+(1) + k'_-(1) = 1,
\end{equation*}
that is
\begin{equation*}
k'_-(1) = k'_+(1).
\end{equation*}
On the other hand we want $ m'(1) \geq m(1) $, which is equivalent, using Lemma \ref{vol growth}, to
\begin{equation*}
\frac{k'_+(0)}{k'_-(1)} \geq 1,
\end{equation*}
that is
\begin{equation*}
k'_+(0) \geq k'_-(1).
\end{equation*}
$ m'(2) \geq m(2) $ is equivalent to
\begin{equation*}
m'(1) \cdot \frac{k'_+(1)}{k'_-(2)} \geq 1,
\end{equation*}
which is implied, since $ m'(1) \geq 1 $, by $ k'_+(1) \geq k'_-(2) $. Now
\begin{equation*}
k'(2) = k(2)
\end{equation*}
is equivalent to
\begin{equation*}
k'_+(1)-k'_-(1) +k'_-(2) - k'_+(2) = 0.
\end{equation*}
Since $ k'_-(1) = k'_+(1) $ it is obvious that $ k'_-(2) - k'_+(2) = k'_-(1) -k'_+(1) = 0 $. So far we then have
\begin{equation*}
1=k'_+(0) \geq k'_-(1) = k'_+(1) \geq k'_-(2) = k'_+(2).
\end{equation*}
We then define $ k'_+(r) $ and $ k'_-(r) $ such that
\begin{equation*}
1=k'_+(0) \geq k'_-(1) = k'_+(1) \geq k'_-(2) = k'_+(2) \geq ...
\end{equation*}
and the graph $ G' $ with $ k'_+(r) $ and $ k'_-(r) $ as inner and outer curvatures and $ m'(0) = 1 $, which clearly exists. By the above discussion we have that $ G' $ has the same Ollivier curvatures as $ G $ and faster volume growth.
\end{exmp}

 We conclude the paper remarking that, while we found a satisfactory relationship between inner/outer curvature and volume growth, there seem to be no connection between volume growth and the Ollivier Ricci curvature alone. It would be interesting to investigate under which additional conditions,  assumptions  on the Ollivier curvature lead to comparison results for volumes. Probably something can be said on finite volume graphs with positive Ollivier curvature, see \cite{Pae} and \cite[Theorem 4.19] {MW}. There is yet another widely used notion of curvature, the Bakry-Emery curvature, which is not considered in this paper and it would be interesting to investigate its connections with volume growth.  We note, in this respect, that in \cite{HM} the authors obtain  volume comparison  and  volume doubling results for a class of linear graphs under Bakry-Emery curvature condition. Using parabolic methods, volume doubling results under Bakry-Emery curvature conditions are also obtained for general graphs in the very recent  \cite{HLLY} and \cite{M}. We plan to explore these and other related questions in a  forthcoming paper.

\section*{Acknowledgements}

The authors are grateful to R. K. Wojciechowski for a very careful reading and for his many comments, suggestions and bibliographical references which improved the presentation of the paper.

\end{document}